\documentclass[11pt,a4paper]{amsart}
\usepackage{color}
\usepackage[T1]{fontenc}
\usepackage[utf8]{inputenc}
\usepackage{hyperref,amssymb,url,upref,verbatim,xspace,mathrsfs}
\usepackage{amsthm,amssymb,amsmath,latexsym,bm,stmaryrd}
\usepackage{spalign}
\RequirePackage[all,ps,cmtip]{xy}
\newdir^{ (}{{}*!/-5pt/\dir^{(}}
\newdir_{ (}{{}*!/-5pt/\dir_{(}}
\RequirePackage[mathscr]{eucal}
\usepackage{mathrsfs}\let\mathcal\mathscr
\setlength{\arraycolsep}{1.5pt}
\setcounter{tocdepth}{1}
\addtolength{\textheight}{5pt}
\addtolength{\topmargin}{-.5\baselineskip}
\emergencystretch2em

\theoremstyle{plain}

\newtheorem*{theorem*}{Main Theorem}
\newtheorem{theorem}{Theorem}
\newtheorem{proposition}[theorem]{Proposition}
\newtheorem{lemma}[theorem]{Lemma}
\newtheorem{corollary}[theorem]{Corollary}

\theoremstyle{definition}
\newtheorem{definition}[theorem]{Definition}

\AtBeginDocument{%
}

\newcounter{toto}
\def\thetoto{\arabic{toto}}
\let\oldmarginpar\marginpar
\def\marginpar#1{\refstepcounter{toto}\textsuperscript{\textup{[\thetoto]}}\oldmarginpar{\footnotesize\textsuperscript{[\thetoto]}\,#1}}

\def\mainmatter{\renewcommand{\baselinestretch}{1.1}\normalfont}

\makeatletter
\@namedef{subjclassname@2010}{\textup{2010} Mathematics Subject Classification}
\def\l@section{\@tocline{1}{0pt}{0pc}{}{}}
\def\l@subsection{\@tocline{2}{0pt}{1.5pc}{}{}}
\def\l@subsubsection{\@tocline{3}{0pt}{2pc}{}{}}
\makeatother
\makeatletter
\@namedef{subjclassname@2020}{%
   \textup{2020} Mathematics Subject Classification}
\makeatother

\newcommand{\C}{\mathbb{C}}\let\CC\C

\newcommand{\R}{\mathbb{R}}

\newcommand{\Z}{\mathbb{Z}}

\newcommand{\bD}{\boldsymbol{D}}
\newcommand{\shhom}{\mathcal{H}\!\mathit{om}}
\DeclareMathOperator{\conj}{c}
\DeclareMathOperator{\Conj}{C}
\let\Rhom\rh
\DeclareMathOperator{\tho}{\mathit{T}\shhom}

\DeclareMathOperator{\RH}{RH}

\newcommand{\rb}{\mathrm{b}}

\newcommand{\coh}{\mathrm{coh}}
\newcommand{\hol}{\mathrm{hol}}
\newcommand{\rhol}{\mathrm{rhol}}

\newcommand{\Mod}{\mathrm{Mod}}

\newcommand{\op}{\mathrm{op}}

\newcommand{\cc}{{\C\textup{-c}}}
\newcommand{\Rc}{{\R\textup{-c}}}

\newcommand{\XS}{X\times S}
\newcommand{\XbS}{\ov X\times S}

\newcommand{\YS}{Y\times S}

\newcommand{\DXS}{\shd_{\XS/S}}
\newcommand{\DXbS}{\shd_{\XbS/S}}

\DeclareMathOperator{\Char}{Char}

\DeclareMathOperator{\pD}{{}^\mathrm{p}\mathsf{D}}
\DeclareMathOperator{\rD}{\mathsf{D}}

\DeclareMathOperator{\DR}{DR}

\DeclareMathOperator{\Db}{\mathscr{D}b}
\DeclareMathOperator{\pDR}{{}^\mathrm{p}DR}

\DeclareMathOperator{\id}{Id}\let\Id\id

\DeclareMathOperator{\Sol}{Sol}
\DeclareMathOperator{\pSol}{{}^\mathrm{p}Sol}

\let\bar\overline
\let\hat\widehat

\let\ov\overline
\let\epsilon\varepsilon

\let\leq\leqslant
\let\geq\geqslant

\def\loccit{loc.\kern3pt cit.{}\xspace}
\def\cf{cf.\kern.3em}
\def\Cf{Cf.\kern.3em}
\def\eg{e.g.\kern.3em}

\def\resp{\text{resp.}\kern.3em}

\newcommand{\cbbullet}{{\raisebox{1pt}{$\sbullet$}}}
\newcommand{\sbullet}{{\scriptscriptstyle\bullet}}
\newcommand{\pOS}{p^{-1}\sho_S}

\def\shd{\mathcal{D}}

\let\cF F
\def\shg{\mathcal{G}}\let\cG G
\def\shh{\mathcal{H}}

\def\shj{\mathcal{J}}

\def\shm{\mathcal{M}}
\def\shn{\mathcal{N}}
\def\sho{\mathcal{O}}

\newcommand{\RedefinitSymbole}[1]{%
\expandafter\let\csname old\string#1\endcsname=#1
\let#1=\relax
\newcommand{#1}{\csname old\string#1\endcsname\,}%
}
\RedefinitSymbole{\forall} \RedefinitSymbole{\exists}

\def\to{\mathchoice{\longrightarrow}{\rightarrow}{\rightarrow}{\rightarrow}}

\def\hto{\mathrel{\lhook\joinrel\to}}

\def\To#1{\mathchoice{\xrightarrow{\textstyle\kern4pt#1\kern3pt}}{\stackrel{#1}{\longrightarrow}}{}{}}

\def\isom{\stackrel{\sim}{\longrightarrow}}

\let\oldbigoplus\bigoplus
\renewcommand{\bigoplus}{\mathop{\textstyle\oldbigoplus}\displaylimits}
\let\oldbigwedge\bigwedge
\renewcommand{\bigwedge}{\mathop{\textstyle\oldbigwedge}\displaylimits}
\let\oldbigcap\bigcap
\renewcommand{\bigcap}{\mathop{\textstyle\oldbigcap}\displaylimits}
\let\oldbigcup\bigcup
\renewcommand{\bigcup}{\mathop{\textstyle\oldbigcup}\displaylimits}

\begin{document}

\author[T. Monteiro Fernandes]{Teresa Monteiro Fernandes}
\address[T. Monteiro Fernandes]{Centro de Matem\'atica, Aplica\c{c}\~{o}es Funda\-men\-tais e Investiga\c c\~ao Operacional and Departamento de Matem\' atica da Faculdade de Ci\^en\-cias da Universidade de Lisboa, Bloco C6, Piso 2, Campo Grande, 1749-016, Lisboa
Portugal}
\email{mtfernandes@fc.ul.pt}
\thanks{The research of T. Monteiro Fernandes was supported by
Funda\c c{\~a}o para a Ci{\^e}ncia e Tecnologia, under the project: UIDB/04561/2020}
\title{Regularization of relative holonomic $\shd$-modules}

\date{\today}

\keywords{Relative $\mathcal D$-module, regular holonomic $\mathcal D$-module, relative constructible sheaf}

\subjclass[2020]{14F10, 32C38, 35A27, 58J15}

\begin{abstract}
Let $X$ and $S$ be complex analytic manifolds where $S$ plays the role of a parameter space. Using the sheaf $\DXS^{\infty}$ of relative differential operators of infinite order, we construct functorially the regular holonomic $\DXS$-module $\shm_{reg}$ associated to a relative holonomic $\DXS$-module $\shm$,
extending to the relative case classical theorems by Kashiwara-Kawai: denoting by $\shm^{\infty}$ the tensor product of $\shm$ by $\DXS^{\infty}$ we explicit $\shm^{\infty}$ in terms of the sheaf of holomorphic solutions of $\shm$. As a consequence of the relative Riemann-Hilbert correspondence we conclude that $\shm^{\infty}$ and $\shm_{reg}^{\infty}$ are isomorphic.
\end{abstract}

\maketitle
\tableofcontents
\mainmatter

\vspace*{-2\baselineskip}\vskip0pt%
\section{Introduction}
The relative framework we deal with is associated to a projection $$p: X\times S\to S$$ where $X$ and $S$ are complex manifolds. 
Throughout this work we identify the relative cotangent bundle $T^*(X\times S/S)$ to $T^*X\times S$ and $d_X$ and $d_S$ will denote respectively the complex dimension of $X$ and of $S$.
Let $\DXS$ be the subsheaf of $\shd_{\XS}$ of operators commuting with $p^{-1}\sho_S$ and let $\Mod_{\coh}(\DXS)$ be the abelian category of coherent $\DXS$-modules.
A $\DXS$ -holonomic module is a coherent $\DXS$-module whose characteristic variety is contained in a product $\Lambda\times S$ where $\Lambda$ is $\C^*$-conic analytic lagrangian in $T^*X$ (\cf\cite{SS}, \cite{S}, \cite{MFCS1}). The datum of a strict (i.e, a $p^{-1}\sho_S$-flat) holonomic $\DXS$-module is equivalent to the datum of a flat family of holonomic $\shd_X$-modules with characteristic variety contained in $\Lambda$.

Let $\DXS^{\infty}$ denote the subsheaf of $\shd_{X\times S}^{\infty}$ of operators commuting with $p^{-1}\sho_S$.  As pointed out in \cite[Rem. 2, p. 406]{SKK}, the sheaf of rings $\DXS^{\infty}$ is faithfully flat over $\DXS$. Indeed the method of the proof of \cite[Th. 3.4.1]{SKK} which concerns the relative microdifferential case out of the zero section of $T^*(X\times S/S)$ adapts to the sheaves $\DXS$ and $\DXS^{\infty}$.

The relative setting means here that  $\shd_X$ and $\shd_X^{\infty}$ are replaced respectively by $\DXS$ and $\DXS^{\infty}$ and that we consider relative holonomic modules.
Our main main result is Theorem \ref{T1} which proves a relative version of the following Kashiwara-Kawai's Theorem  (\cite[Th. 1.4.9]{KK}): 
Let $\shd_X^{\infty}$ denote the sheaf of linear differential operators on $X$ with possibly infinite order. To any holonomic $\shd_X$-module one associates $\shm^{\infty}:=\shd_X^{\infty}\otimes_{\shd_X}\shm$ and, if $F=\Sol \shm$, then $\shm^{\infty}\simeq \Rhom(F, \sho_X)$.

The same authors introduce in loc.cit a regular holonomic $\shd_X$-module $\shm_{reg}$ contained in $\shm^{\infty}$  and prove in \cite[Theorem 5.2.1]{KK} a $\shd_X^{\infty}$-isomorphism: 
\begin{equation}\label{E130}
\shm^{\infty}\simeq \shm_{reg}^{\infty}
\end{equation} 

In (b) of Theorem \ref{T2} we extend this result to the relative setting.
The proof is based on the relative Riemann-Hilbert correspondence obtained in \cite{FMFS1} \& \cite{FMFS2} since one previous step is to prove that $(\cbbullet)_{reg}\simeq \RH^S(\Sol \cbbullet)[-d_X]$. The latter isomorphism is a contribution to the understanding of the functor $\RH^S$.

The task is not trivial although we dispose of a good notion of regularity recalled below, as well as of  the inspiration provided by the techniques in \cite{KK}. Let us explain why:

One big difference from the absolute to the relative case is that the triangulated category of $\DXS$-complexes having bounded holonomic cohomologies ($\rD^\rb_{\hol}(\DXS)$) is not stable under the inverse image functor by morphisms $f\times \Id:X'\times S\to X\times S$. 
Such constraint entails a loss of several functorial properties (for instance localization, algebraic supports cohomology).

The notions of $S$-$\R$- and $S$-$\C$-constructibility were introduced in \cite{MFCS1}  for objects in $\rD^\rb(\pOS)$ as well as a natural duality and a middle perversity $t$-structure on the triangulated category $\rD^\rb_{\cc}(p^{-1}\sho_S)$ whose objects have $S$-$\C$-constructible cohomologies. A perverse object with perverse dual is then equivalent to the datum of a flat family of perverse sheaves on $X$.

The lack of functorialities in $\rD^\rb_{\hol}(\DXS)$ prevents from stating an irregular relative Riemann-Hilbert correspondence by simply adapting the strategies used in the absolute case as treated by D'Agnolo-Kashiwara (\cf \cite{AK}). For a satisfactory functorial behaviour, regularity is necessary as proved in \cite{FMFS1}, \cite{FMFS2}.

Recall that a regular holonomic $\DXS$-module is a holonomic $\DXS$-module  satisfying the following condition: the (derived) holomorphic restriction to each fiber of $p$ is a regular holonomic complex on $X$. We also consider the associated triangulated category  ($\rD^\rb_{\rhol}(\DXS)$) of complexes having bounded regular holonomic cohomologies.

It is then natural to ask what kind of "regularity" can be associated to any holonomic $\DXS$-module.

Recall that the relative  Riemann-Hilbert equivalence was first proved in \cite{FMFS1} assuming that $d_S=1$:

The functor  $\pSol: \shm\mapsto\Rhom_{\DXS}(\shm, \sho_{X\times S})[d_X]$ from $\rD^\rb_{\rhol}(\DXS)$ to $\rD^\rb_{\cc}(p^{-1}\sho_S)$  admits a right and left  adjoint denoted by $\RH^S$ and thus $\pSol$ is an equivalence of categories.

In \cite{FMFS2}, the same authors proved that this equivalence holds true for arbitrary $d_S$. 

In the absolute case (meaning that S=pt) we recover Kashiwara's regular Riemann-Hilbert correspondence, and, if X=pt, we get the natural duality on the bounded derived category of complexes with $\sho_S$-coherent cohomologies. 

We now precise our results:

If $\shm$ is a holonomic $\DXS$-module, we define $$\shm^{\infty}:=\DXS^{\infty}\otimes_{\DXS}\shm$$  and we generalize this definition by flatness to $\rD^\rb_{\hol}(\DXS)$.

In our main result (Theorem \ref{T1}) we prove that if $\shm$ is an object of $\rD^\rb_{\hol}(\DXS)$  and $F=\pSol \shm$ then $\shm^{\infty}\simeq \Rhom_{p^{-1}\sho_S}(F, \sho_{
\XS})$ (to compare with \cite[Th. 1.4.9]{KK}). As a consequence one concludes in Theorem \ref{T2} that if $\shm$ is a holonomic $\DXS$-module then $\shm_{reg}\simeq\RH^S(\pSol \shm)$ and so $(\ref{E130})$ holds true in this setting.

The simplest example is the following: for a submanifold $Z$ of $X$, one has $$\RH^S(\C_{Z\times S}\otimes p^{-1}\sho_S)^{\infty}[-d_X]$$ $$\simeq \tho(\C_{Z\times S}, \sho_{X\times S})^{\infty}\simeq B^{\infty}_{Z\times S|X\times S}[-d]$$ $$\simeq \Rhom(\C_{Z\times S}, \sho_{X\times S})$$ where $d$ is the codimension of $Z$.

Another example is provided by \cite[page 814]{KK}, replacing $a\in\C$ by a holomorphic function $a(s)$ without zeros on some open $S:=\Omega\subset\C$.  For $X=\C$, we consider the $\DXS$-module (holonomic, non regular) defined by 
$$(x^2\partial_x-a(s))u(x, s)=0$$

We then obtain (\cf page 815 of \cite{KK}) an equivalent system substituing the generator $u$ by $u_0=u$ and introducing $u_1=-x\partial_xu$, 
\begin{equation}\label{E131}
  \spalignsys{
    x\partial_xu_0+u_1=0 ;
     -a(s)u_0-xu_1=0 
  }
\end{equation}
After multiplication by matrices in $\DXS^{\infty}$ (the matrices provided by \cite{KK} which now depend on the parameter $s$), one concludes a $\DXS^{\infty}$-isomorphism from the $\DXS^{\infty}$-module extension of $(\ref{E131})$ to the $\DXS^{\infty}$-module (with generators $w_0, w_1$) extension of the regular holonomic $\DXS$-module

\begin{equation}\label{E132}
  \spalignsys{
    x w_0-a(s)w_1=0 ;
    x\partial_x w_1=0 
  }
\end{equation}

We remark that \cite{KK} uses microlocal technics for the proof of the regularity of $\shm_{reg}$. With the more recent notion of microsupport (\cite{KS1}) and the results on \cite{SS}, the necessary tools in the relative framework (see Section Technical Lemmas) are easier to prove. Together with the relative Riemann-Hilbert correspondence, our task is much simplified, in particular we no longer need to microlocalize.

We warmly thank Pierre Schapira and Luisa Fiorot for attentively discussing several points.
We are deeply grateful for the referee's corrections which contributed to improve our work.

\section{A short reminder on the relative Riemann-Hilbert correspondence}

Below we summarize the background from \cite{MFCS1}, \cite{MFCS2}, \cite{FMFS1}, \cite{FMFS2} we shall need in the sequel.

\subsection{Holonomic and regular holonomic  $\DXS$-modules}
\begin{enumerate}
\item{We say that a $\pOS$-module is strict if it is flat over $\pOS$.}
\item{We recall that $\shm\in\Mod_{\coh}(\shd_{X\times S/S})$ is holonomic if the characteristic variety $\Char(\shm)$ is contained in $\Lambda\times S$, where $\Lambda$ is analytic $\C^*$-conic lagrangian subset of $T^*X$; we denote by $\rD^{\rb}_{\hol}(\DXS)$ the associated triangulated category whose objects are the bounded complexes with holonomic cohomologies.}

\item{There is a well defined duality functor $$\bD: \rD^{\rb}_{\hol}(\DXS)\to \rD^{\rb}_{\hol}(\DXS)^\op$$ given by
$$\bD \shm:=\Rhom_{\DXS}(\shm, \DXS\otimes _{\sho_{X\times S}}\Omega_{\XS/S}^{\otimes^{-1}})[d_X])$$ where $\Omega_{\XS/S}$ denotes the sheaf of relative differential forms of maximal degree.}
\item{$\bD$ is an involution, i.e. $\bD\bD=\Id$.}
\item{We recall a tool introduced in \cite{MFCS1}, the holomorphic restriction to each fiber of $p$:  $$\forall s\in S,\, Li^*_s(\cbbullet):=\cbbullet\overset{L}{\otimes}_{p^{-1}\sho_S}p^{-1}(\sho_S/\shj_s)$$ where $\shj_s$ is the maximal ideal of functions vanishing in $s$.}
\item{ A Nakayama's Lemma variation: Let $\shm\in\rD^\rb_\hol(\DXS)$ and assume that $L i^*_{s_o}\shm=0$ for each $s_o\in S$. Then $\shm=0$.}
\item{Let $\shm$ be an object of $\rD^{\rb}_{\hol}(\DXS)$. Then $\bD \shm$ is concentrated in degree zero and $\shh^0\bD \shm$ is strict if and only if $\shm$ is itself concentrated in degree zero and $\shh^0\shm$ is a strict $\DXS$-module.}
\item{We say that $\shm\in\Mod(\shd_{X\times S/S})$ is regular holonomic if it is holonomic and $\forall s\in S, Li^*_s\shm\in\rD^{\rb}_{\rhol}(\shd_X)$; we denote by $\rD^{\rb}_{\rhol}(\DXS)$ the associated triangulated full subcategory of $\rD^\rb_{\hol}(\DXS)$.}
\item{$\rD^{\rb}_{\rhol}(\DXS)$ is stable by duality.}
\item{$\Mod_{\hol}(\DXS)$ and $\Mod_{\rhol}(\DXS)$ are closed under taking extensions in $\Mod(\DXS)$ and subquotients in $\Mod_{\coh}(\DXS)$.}
\end{enumerate}

\subsection{$S$-constructibility}

\vspace{2mm}

We say that a  sheaf $L$ of $p^{-1}\sho_S$-modules is $S$-locally constant coherent if, locally on $X\times S$, $L$ is isomorphic to $p^{-1}G$ where $G$ is an $\sho_S$-coherent module. Such an $L$ is also called $S$-local system. We recall the following full triangulated subcategories of $\rD^\rb(p^{-1}\sho_S)$.

\begin{itemize}
\item{An object $F\in\rD^\rb(p^{-1}\sho_S)$ is an object of $\rD^\rb_\cc(\pOS)$ if there exists a $\C$-analytic stratification $(X_{\alpha})_{\alpha\in A}$ of $X$, such that $\forall j\in\Z, \forall \alpha\in A, \shh^jF|_{X_{\alpha}\times S}$ is $S$-locally constant coherent. We say for short that $F$ is $S$-$\C$-constructible.} 

\item{Replacing $\C$-analyticity by subanalyticity with respect to the real analytic manifold $X_{\R}$ undelying $X$, we obtain the notion of $S-\R$-constructibility and the corresponding triangulated category $\rD^\rb_{\Rc}(p^{-1}\sho_S)$. $\rD^\rb_\cc(\pOS)$ is a full subcategory of $\rD^\rb_{\Rc}(p^{-1}\sho_S)$.}
\item{If $F\in\rD^\rb_\Rc(\pOS)$ then for each $x\in X$, $F|_{\{x\}\times S}$ belongs to $\rD^\rb_{\coh}(\sho_S)$.}
\item{There is a natural duality functor $\bD: \rD^\rb_\Rc(\pOS)\to \rD^\rb_\Rc(\pOS)^\op$ which is an involution given by
$$\bD F=\Rhom_{\pOS}(F,\pOS)[2 d_X]$$}

\item{$\rD^\rb_\cc(\pOS)$ is stable by duality.}

\end{itemize}

\subsection{ A middle perversity t-structure on $\rD^\rb_\cc(\pOS)$.}

We consider the two subcategories $\pD^{\leq0}_\cc(\pOS)$ and $\pD^{\geq0}_\cc(\pOS)$ of $\rD^\rb_\cc(\pOS)$ defined as follows:

$F\in \pD^{\leq0}_\cc(\pOS)$ (\resp $F\in \pD^{\geq0}_\cc(\pOS)$) if 
for an adapted $\mu$-stratification $(X_\alpha)_{\alpha\in A}$, noting \hbox{$i_\alpha:X_\alpha\hto X$} 
\begin{align*}
\forall\alpha\text{ and }\forall j>-\dim(X_\alpha),\quad\shh^j(i^{-1}_\alpha F)&=0 \\
\text{(resp.)}\,\forall \alpha\text{ and }\forall j<-\dim(X_\alpha),\quad\shh^j(i^{!}_\alpha F)&=0.
\end{align*}

We say that $F$ of $\rD^\rb_\cc(\pOS)$ is \emph{perverse} if $F \in\pD^{\leq0}_\cc(\pOS)$ and $F\in\pD^{\geq0}_\cc(\pOS)$, that is $F$ belongs to the heart of the $t$-structure defined above.

\remark Note that $\bD$ is not $t$-exact for this $t$-structure, in particular it does not preserve perversity.

\begin{theorem}[\cite{MFCS2}] For a given object $F\in \rD^\rb_\cc(\pOS)$,
$F$ and $\bD F$ are perverse if and only if $\forall s_o\in S, Li^*_{s_o}(F)$ is perverse in $\rD^\rb_{\cc}(\C_X)$.
\end{theorem}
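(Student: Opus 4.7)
The plan is to leverage two compatibilities of the fiberwise restriction $Li^*_{s_o}$ with the operations defining perversity. First, base change: since $i_\alpha:X_\alpha\hookrightarrow X$ and $\{s_o\}\hookrightarrow S$ are embeddings along transverse factors of $X\times S$, one has canonical isomorphisms
\[
Li^*_{s_o}(i_\alpha^{-1}F)\simeq i_\alpha^{-1}(Li^*_{s_o}F),\qquad Li^*_{s_o}(i_\alpha^!F)\simeq i_\alpha^!(Li^*_{s_o}F).
\]
Second, a duality compatibility
\[
Li^*_{s_o}(\bD F)\simeq \bD_X(Li^*_{s_o}F),
\]
obtained by rewriting $\bD F=R\shhom_{\pOS}(F,\pOS)[2d_X]$ and using a length-$d_S$ Koszul resolution of $\sho_S/\shj_{s_o}$ over $\sho_S$, together with the $\sho_S$-coherence of the stalks of $F$, in order to interchange $R\shhom_{\pOS}(\cdot,\pOS)$ with $(\cdot)\otimes^L_{\pOS}p^{-1}(\sho_S/\shj_{s_o})$. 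One then uses the identification $Li^*_{s_o}(\pOS)\simeq p^{-1}(\sho_S/\shj_{s_o})\simeq \C_{X\times\{s_o\}}$ to match the shifts with the classical Verdier duality $\bD_X=R\shhom(\cdot,\C_X)[2d_X]$.

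For the direct implication, assume $F$ and $\bD F$ are perverse. The support condition on $F$ passes through $Li^*_{s_o}$ by right $t$-exactness of the tensor product $(\cdot)\otimes^L_{\pOS}p^{-1}(\sho_S/\shj_{s_o})$, which cannot raise the top nonzero cohomological degree; combined with base change this yields the support condition on $Li^*_{s_o}F$. By the classical self-duality of the middle perversity on $\rD^\rb_\cc(\C_X)$, the cosupport condition on $Li^*_{s_o}F$ is equivalent to the support condition on $\bD_X(Li^*_{s_o}F)$, which by the duality compatibility equals the support condition on $Li^*_{s_o}(\bD F)$; the latter is obtained from the support condition on $\bD F$ by the same right $t$-exactness.

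For the converse, I would argue by a Nakayama-type argument in the spirit of item~(6) of Section~2.1. Suppose $F$ violates the support condition: let $j_0>-\dim X_\alpha$ be maximal with $\shh^{j_0}(i_\alpha^{-1}F)\neq 0$. This top cohomology is an $S$-locally constant coherent sheaf on $X_\alpha\times S$, hence locally isomorphic to $p^{-1}G$ with $G$ a nonzero $\sho_S$-coherent module; by Nakayama, $G/\shj_{s_o}G\neq 0$ for some $s_o$. Right $t$-exactness of $Li^*_{s_o}$ combined with base change identifies this reduction with $\shh^{j_0}(i_\alpha^{-1}Li^*_{s_o}F)$, contradicting the support condition on $Li^*_{s_o}F$. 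The same scheme applied to $\bD F$ via the duality compatibility yields the support condition on $\bD F$. The remaining cosupport conditions on $F$ and $\bD F$ are then recovered symmetrically, using the Nakayama argument with $i_\alpha^!$ in place of $i_\alpha^{-1}$ and the duality identity relating $i_\alpha^!F$ to $i_\alpha^{-1}\bD F$ (and vice versa), so that the classical perversity of each $Li^*_{s_o}F$ simultaneously controls support and cosupport after restriction.

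The main technical hurdle is the duality compatibility $Li^*_{s_o}(\bD F)\simeq \bD_X(Li^*_{s_o}F)$: the required interchange of $R\shhom_{\pOS}(\cdot,\pOS)$ with $(\cdot)\otimes^L_{\pOS}p^{-1}(\sho_S/\shj_{s_o})$ is not formal but depends essentially on the $\sho_S$-coherence of the stalks of objects in $\rD^\rb_\cc(\pOS)$ and on the finite projective dimension $d_S$ of the residue sheaf. Once this is in place, both implications reduce to routine combinations of base change, $t$-exactness of the fiberwise restriction, and Nakayama's lemma.
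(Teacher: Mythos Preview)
The present paper does not contain a proof of this statement: it is quoted verbatim from \cite{MFCS2} as background, so there is no ``paper's own proof'' to compare your attempt against. What can be said is whether your outline is internally sound.

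Your strategy is essentially correct and close to how the result is obtained in \cite{MFCS2}. The two compatibilities you isolate are the right ones. Base change $Li^*_{s_o}\circ i_\alpha^{-1}\simeq i_\alpha^{-1}\circ Li^*_{s_o}$ and $Li^*_{s_o}\circ i_\alpha^{!}\simeq i_\alpha^{!}\circ Li^*_{s_o}$ follow immediately once one represents $Li^*_{s_o}$ by tensoring with the Koszul complex of $\shj_{s_o}$, a bounded complex of free $p^{-1}\sho_S$-modules of finite rank; the same representation yields your duality compatibility $Li^*_{s_o}\bD F\simeq \bD_X(Li^*_{s_o}F)$, and in fact only finite projective dimension of $\sho_S/\shj_{s_o}$ is needed here, not the $\sho_S$-coherence of stalks. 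The forward implication then only uses the support conditions on $F$ and on $\bD F$ together with right $t$-exactness of $Li^*_{s_o}$, exactly as you say.

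The one place where your write-up is hazy is the recovery of the cosupport conditions in the converse. Phrasing it as ``the Nakayama argument with $i_\alpha^!$ in place of $i_\alpha^{-1}$'' is misleading: since $Li^*_{s_o}$ is only right $t$-exact, a direct Nakayama at the \emph{bottom} degree of $i_\alpha^!F$ does not work without further input. The clean way---which you also allude to---is to use the Verdier identity $i_\alpha^!F\simeq \bD_{X_\alpha}\,i_\alpha^{-1}(\bD F)$ together with the fact that $R\shhom_{\pOS}(L,\pOS)$ sits in degrees $[0,d_S]$ for any $S$-locally constant coherent $L$. A straightforward degree count then shows that the support condition on $\bD F$ \emph{formally implies} the cosupport condition on $F$ (and symmetrically for $\bD F$). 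Thus once Nakayama gives you the two support conditions, no further Nakayama is needed for cosupport. With this clarification your argument is complete.
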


 \subsection{Link with holonomicity}
 We have the following link with holonomic $\DXS$-modules. Let us note $\pSol\shm=\Rhom_{\DXS}(\shm, \sho_{\XS})[d_X]$ and $\pDR \shm=\Rhom_{\DXS}(\sho_{\XS}, \shm)[d_X]$.

 Then (\cf \cite{FMF}, \cite{MFCS1}, \cite{MFCS2}):
 \begin{itemize}
\item
{$\Sol, \DR:\rD^{\rb}_{\hol}(\DXS)$ take values in $\rD^{\rb}_{\cc}(\pOS)$ and $\bD\pSol=\pDR=\pSol\bD$. } 
\item{If $\shm\in \Mod_{\hol}(\DXS)$ then $\pDR\shm$ is perverse (\cf \cite[Theorem 4.1]{FMF}).}
\item{If $F$ is such that $\bD F$ is perverse then $\RH^S(F)$ is concentrated in degree zero (\cf \cite[Th. 4.1]{FMF}). In particular, for any holonomic $\DXS$-module, $\RH^S(\pSol \shm)$ is concentrated in degree zero.}
 \item{Given $\shm\in\rD^\rb_{\hol}(\DXS)$, $\shm$ and $\bD\shm$ are strict $\DXS$-modules if and only if $\pSol \shm$ and $\pDR \shm=\bD\pSol\shm$ are perverse. }
 \end{itemize}

\subsection{The functor $\RH^S$}With the subanalytic tools developed in \cite{MFP1}, \cite{MFP2}, the functor $\RH^S$ was first introduced in \cite{MFCS2}, followed by \cite{FMFS1} (case $d_S=1$) and by  \cite{FMFS2} (general case). Kashiwara's functor $\RH$ (\cf \cite{Ka3}) is recovered with $d_S=0$. 
 Below we give a short reminder of its construction and main results:

 Let $\rho_S: X\times S\to X_{sa}\times S$ be the natural morphism of sites introduced in \cite{MFP2}. The functor $\rho_S^{-1}$ admits a left adjoint $\rho_{S!}$ which is exact. We note $\sho_{X\times S}^{t, S}$ the relative subanalytic sheaf on $X_{sa}\times S$ associated in \cite{MFP2} to the subanalytic sheaf $\sho^t_{X\times S}$ on $(X\times S)_{sa}$ (introduced in \cite{KS6}, see also \cite{LP08}).
 
 The functor $\RH^S$ on $\rD^b(\pOS)^\op$ is given by
 
 $$\RH^S(\cbbullet):=\rho_S^{-1}\Rhom_{\rho_{S*}\pOS}(R\rho_{S*}(\cbbullet), \sho^{t,S}_{X\times S})[d_X]$$

\begin{theorem}[\cite{MFCS2}, \cite{FMFS1}, \cite{FMFS2}]\label{T20}

\begin{enumerate}
\item{$\RH^ S$ induces an equivalence of categories: $\rD^\rb_{\cc}(p^{-1}\sho_S)^\op\to \rD^\rb_{\rhol}(\DXS)$ compatible with duality.}

\item{ $F$ is perverse with a perverse dual  if and only if $\RH^S(F)$ is strict and concentrated in degree zero.}

\item{\,For $F\in \rD^\rb_{\cc}(p^{-1}\sho_S)$ and $\shm\in\rD^\rb_{\rhol}(\DXS)$, we have a natural isomorphism in $\rD^\rb_{\cc}(p^{-1}\sho_S)$ $$\Rhom_{\DXS}(\shm, \RH^S(F)[-d_X])\overset{\sim}{\rightarrow} \Rhom_{\DXS}(\shm, \Rhom_{p^{-1}\sho_S}(F,\sho_{X\times S}))$$}
\end{enumerate}
\end{theorem}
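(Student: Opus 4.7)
The plan is to reduce the equivalence statement to Kashiwara's absolute regular Riemann--Hilbert correspondence by a fiberwise argument, combining the Nakayama-type lemma recalled in the reminder with a suitable base-change identity that compares $\RH^S$ with the absolute $\RH$ along the closed inclusions $\{s_o\}\hookrightarrow S$. The bulk of the work lies in part (a); parts (b) and (c) then follow by additional structural arguments.

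For (a), I would first show that $\RH^S$ lands in $\rD^\rb_{\rhol}(\DXS)$ by proving a base-change isomorphism
\[
Li^*_{s_o}\,\RH^S(F)\simeq \RH\bigl(Li^*_{s_o}F\bigr),\qquad s_o\in S.
\]
This compatibility ultimately rests on how the subanalytic sheaf $\sho^{t,S}_{X\times S}$ of \cite{MFP2} behaves under restriction to the fiber $X\times\{s_o\}$, and is the technical heart of the argument. Granting the base change, $Li^*_{s_o}F$ is $\C$-constructible on $X$, so Kashiwara's theorem yields $\RH(Li^*_{s_o}F)\in\rD^\rb_{\rhol}(\shd_X)$, which by the very definition of relative regular holonomicity gives $\RH^S(F)\in\rD^\rb_{\rhol}(\DXS)$. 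Next, I would construct natural transformations $\pSol\circ\RH^S[-d_X]\Rightarrow\id$ and $\id\Rightarrow\RH^S[-d_X]\circ\pSol$ and verify they are isomorphisms fiber by fiber; the Nakayama lemma on $\rD^\rb_{\hol}(\DXS)$, together with its counterpart for $\rD^\rb_{\cc}(\pOS)$ which rests on the same fiber-by-fiber criterion, upgrades pointwise isomorphisms to global ones. Compatibility with duality is proved in the same spirit: both $\RH^S\circ\bD$ and $\bD\circ\RH^S$ satisfy the same defining adjunction, hence are canonically isomorphic.

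For (b), the reminder above records that $\shm\in\rD^\rb_{\hol}(\DXS)$ is strict and concentrated in degree zero precisely when both $\shm$ and $\bD\shm$ are; equivalently, when $Li^*_{s_o}\shm$ is concentrated in degree zero for every $s_o$. Under the absolute Riemann--Hilbert correspondence this translates to $Li^*_{s_o}F$ being perverse with perverse dual on $X$, and the theorem of \cite{MFCS2} quoted in the excerpt identifies this fiberwise perversity with the requirement that both $F$ and $\bD F$ be perverse. The base-change isomorphism of part (a) then closes the argument.

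For (c), I would unfold the definition of $\RH^S$ and perform a standard Hom-tensor adjunction to rewrite the left-hand side as $\Rhom_{\DXS}(\shm,\Rhom_{\pOS}(F,\sho^{t,S}_{X\times S}))$, using that $\rho_S^{-1}R\rho_{S*}\pOS\simeq\pOS$. On regular holonomic $\shm$ one may then replace $\sho^{t,S}_{X\times S}$ by $\sho_{X\times S}$ inside $\Rhom_{\DXS}(\shm,\cbbullet)$, a comparison between tempered and ordinary holomorphic solutions which in the regular holonomic case is standard. The main obstacle, as signalled already, is the base-change statement for $\RH^S$: it requires delicate control of the tempered subanalytic sheaf under restriction to a fiber, and this is what genuinely distinguishes the relative case from Kashiwara's original one.
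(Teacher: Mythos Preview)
The paper does not prove this theorem: it is stated in the ``short reminder'' section as a background result quoted from \cite{MFCS2}, \cite{FMFS1}, \cite{FMFS2}, with no proof given here. There is therefore nothing in the present paper to compare your proposal against.

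That said, your outline is broadly in line with how the cited references actually establish the result. The reduction of (a) to the absolute Riemann--Hilbert correspondence via a base-change isomorphism $Li^*_{s_o}\RH^S(F)\simeq\RH(Li^*_{s_o}F)$ followed by the Nakayama-type argument is indeed the backbone of the proof in \cite{FMFS1} (for $d_S=1$) and \cite{FMFS2} (general $d_S$); your identification of the base-change compatibility for $\sho^{t,S}_{X\times S}$ as the technical crux is accurate. For (b) your argument is essentially the one in \cite{MFCS2}. For (c), your sketch is a bit too informal: in the references the isomorphism is obtained not by a direct ``tempered vs.\ ordinary'' swap but rather via the full faithfulness already proved in (a), together with the construction of a canonical morphism $\RH^S(F)[-d_X]\to\Rhom_{\pOS}(F,\sho_{X\times S})$; the passage from $\sho^{t,S}$ to $\sho$ for regular holonomic $\shm$ is not as immediate in the relative setting as your wording suggests and requires the machinery of relative subanalytic sheaves developed in \cite{MFP1}, \cite{MFP2}.
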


\subsection { Topological aspects of $\sho_S$} $\sho_S$ is a sheaf of complete bornological algebras (multiplicatively convex sheaf of Fr\'echet algebras over $S$). In the category of sheaves of complete bornological modules over $\sho_S$ (denoted by $\mathrm{Born}(\sho_S))$, C. Houzel (\cf\cite{H}) introduced a notion of tensor product 
$\cbbullet\hat{\otimes}_{\sho_S}\cbbullet$. To the latter one associates a family of functors $\cbbullet\hat{\otimes}\shm$ on the category of bornological vector spaces, depending functorialy on $\shm\in \mathrm{Born}(\sho_S)$ (\cf \cite[Section 3.4]{SS}).
We have \begin{equation}\label{E122}\sho_{X\times S}|_{\{x\}\times S}\simeq \sho_{X,x}\hat{\otimes}\sho_S
\end{equation}

Then (\ref{E122}) shows that $\sho_{X\times S}|_{\{x\}\times S}$ is a so-called FN-free as well as a DFN-free $\sho_S$-module (\cf \cite[page 25]{SS} for the definition and also \cite{RR}). 

In particular, given another complex manifold $Y$, we have
\begin{equation}\label{E123}
\sho_{X\times Y\times S}|_{\{(x,y)\}\times S}\simeq (\sho_{X\times S}|_{\{x\}\times S})\hat{\otimes}_{\sho_S}(\sho_{Y\times S}|_{\{y\}\times S})
\end{equation}

\section{Technical Lemmas}

In order to prove the main theorem we will need the following results:

\subsection{Complements on $S$-$\R$-constructible sheaves}
We refer to \cite[Chapter VIII]{KS1} for the background on constructibility. 

\notation\label{NP} For short we shall keep the notations $p$ as well as $\pOS$ without referring to the manifold $X$ whenever there is no risk of ambiguity.

Let $X$ and $Y$ be complex manifolds. Let $q_1: X\times Y\times S\to X\times S$ be the first projection and $q_2: X\times Y\times S\to Y\times S$ be the second projection,  which is illustrated by the following commutative diagram below.
\begin{equation}\label{eq:commutpij}
\begin{array}{c}
\xymatrix{
X\times Y\times S\ar[r]^-{q_1}\ar[d]_{q_2}\ar[dr]^{p}&X\times S\ar[d]^p\\
Y\times S\ar[r]^-{p}&S
}
\end{array}
\end{equation}

\begin{lemma}\label{L2}  For any $F\in\rD^\rb_{\Rc}(p^{-1}\sho_S)$ on $X\times S$ and any object $\shg$ of $\rD^\rb(\pOS)$ on $Y\times S$ the functorial morphism
\begin{equation}\label{E128}T(F):=q_1^{-1}\Rhom_{p^{-1}\sho_S}(F, p^{-1}\sho_S)\otimes^L_{p^{-1}\sho_S}q_2^{-1}\shg
\end{equation} 
$${\longrightarrow}T'(F):=\Rhom_{p^{-1}\sho_S}(q_1^{-1}F, q_2^{-1}\shg).
$$ is an isomorphism.
\end{lemma}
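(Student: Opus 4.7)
The plan is to construct the natural morphism $\alpha$ from left to right and then prove it is an isomorphism by devissage on $F$. I would construct $\alpha$ starting from the evaluation pairing $\Rhom_{\pOS}(F,\pOS)\otimes^L_{\pOS}F\to\pOS$ on $X\times S$: apply $q_1^{-1}$ (using $q_1^{-1}\pOS = \pOS$ on $X\times Y\times S$), tensor with $q_2^{-1}\shg$, and invoke the tensor-hom adjunction to obtain $\alpha$.

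Next, since both sides are triangulated functors of $F$, I would devissage via Lemma \ref{L100} combined with local-finiteness and truncation arguments, reducing the statement to $F = \C_{U\times S}\otimes p^{-1}G$ with $U\subset X$ open, subanalytic, relatively compact, and $G\in\rD^\rb_{\coh}(\sho_S)$. A further step, resolving $G$ locally on $S$ by finite free $\sho_S$-modules and applying triangulated devissage again, would reduce to the case $F = \C_{U\times S}\otimes\pOS = j_!(\pOS|_{U\times S})$ for $j:U\times S\hookrightarrow X\times S$ the natural open immersion.

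In this base case, let $\tilde j:U\times Y\times S\hookrightarrow X\times Y\times S$ and $\tilde q_1:U\times Y\times S\to U\times S$ denote the base changes of $j$ and $q_1$. Then the left-hand side of $\alpha$ rewrites as $q_1^{-1}Rj_*(\pOS|_{U\times S})\otimes^L_{\pOS}q_2^{-1}\shg$, while the right-hand side equals $R\tilde j_*(\tilde j^{-1}q_2^{-1}\shg)$; I would conclude by combining the base change isomorphism $q_1^{-1}Rj_*\simeq R\tilde j_*\tilde q_1^{-1}$ (valid because the pullback square is Cartesian and $q_1$ is a topological submersion) with the projection formula $R\tilde j_*(\pOS|_{U\times Y\times S})\otimes^L_{\pOS}q_2^{-1}\shg\simeq R\tilde j_*(\tilde j^{-1}q_2^{-1}\shg)$.

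The main obstacle will be this last projection formula for the open immersion $\tilde j$, which is not formal. It holds here because $\tilde j = j\times\id_{Y\times S}$ and $q_2^{-1}\shg$ is constant along the $X$-factor acted on by $\tilde j$, so the identity reduces to a stalkwise verification exploiting the product structure together with the subanalyticity and relative compactness of $U$; alternatively, one may pass to the subanalytic site used elsewhere in the paper, where such formulas are better behaved.
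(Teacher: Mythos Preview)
Your reduction is the same as the paper's: use Lemma~\ref{L100} to reduce to $F=\C_{U\times S}\otimes p^{-1}G$ with $U\subset X$ open subanalytic relatively compact and $G\in\rD^\rb_{\coh}(\sho_S)$, then reduce further to $G=\sho_S$. The difference is entirely in the base case. The paper does not unwind $\Rhom(\C_{U\times Y\times S},-)$ as $R\tilde\jmath_*\tilde\jmath^{-1}$ and then chase a base-change/projection formula; instead it observes that after the reduction both sides become
\[
\Rhom(\C_{U\times Y\times S},\C_{X\times Y\times S})\otimes q_2^{-1}\shg
\quad\text{and}\quad
\Rhom(\C_{U\times Y\times S},q_2^{-1}\shg),
\]
and concludes directly by \cite[Prop.~5.4.14\,(ii)]{KS1}: the microsupport of $\C_{U\times Y\times S}$ lives over $T^*X\times T^*_{Y\times S}(Y\times S)$ while that of $q_2^{-1}\shg$ lives over $T^*_XX\times T^*(Y\times S)$, so they meet only along the zero section and the natural morphism $\bD'(\C_{U\times Y\times S})\otimes q_2^{-1}\shg\to\Rhom(\C_{U\times Y\times S},q_2^{-1}\shg)$ is an isomorphism.

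Your ``projection formula for $\tilde\jmath$'' is literally the same statement rewritten, and the microsupport criterion is precisely the clean tool that dispatches it. Your proposed justification (stalkwise, exploiting the product structure and subanalyticity of $U$) is morally right---it amounts to a K\"unneth argument using that subanalytic opens in $X$ have locally finite cohomology---but as written it is a sketch rather than a proof, and the alternative ``pass to the subanalytic site'' is vaguer still. Invoking \cite[Prop.~5.4.14]{KS1} closes the gap in one line and is what the paper does; this is also in keeping with the paper's stated philosophy that the microsupport formalism replaces the heavier microlocal machinery of \cite{KK}.
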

\begin{proof} 
The proof is now simpler than that of Lemma B.3 of \cite{KK} since we dispose of the notion of microsupport and of its properties (\cf \cite[Chap.V]{KS1}). It is sufficient to check the isomorphism locally. Furthermore, arguing by induction on the length of $F$, we may assume that $F$ is in degree zero, that is, $F$ is an $S$-$\R$-constructible sheaf. 

We recall the following result (\cf Lemma.\,A.9 in the complete version of \cite{FMFS2}, https://arxiv.org/pdf/2203.05444.pdf):

\begin{lemma}\label{lem:epiconst}
Let $F$ be an $S$-$\R$-constructible sheaf on $\XS$. Then there exist
\begin{itemize}
\item
a locally finite covering $(U(\sigma))_{\sigma\in\Delta}$ of $X$ by open subanalytic relatively compact subsets of $X$,
\item
for each $\sigma\in\Delta$ a coherent $\sho_S$-module $G_\sigma(F)$ on $S$,
\item
and an epimorphism $\bigoplus_{\sigma\in\Delta}\CC_{U(\sigma)}\boxtimes G_\sigma(F)\to F$.
\end{itemize}
\end{lemma}

Let us assume for a moment that $F=\C_{U}\boxtimes G$ for some open relatively compact subanalytic subset $U$ of $X$ and for some coherent $\sho_S$-module $G$. In that case, the proof of Lemma \ref{L2} is as follows.
Regarding the left hand term of $(\ref{E128})$ we have a chain of isomorpisms:
$$q_1^{-1}\Rhom_{\pOS}(\C_U\boxtimes G, \pOS)\otimes^L_{\pOS}q_2^{-1}\shg$$$$\simeq 
q_1^{-1}\Rhom(\C_{U\times S}, \Rhom_{\pOS}(p^{-1} G, \pOS))\otimes^L_{\pOS}q_2^{-1}\shg$$$$\underset{(a)}{\simeq} q_1^{-1}\Rhom(\C_{U\times S}, \C_{X\times S})\otimes q_1^{-1}\Rhom_{\pOS}(p^{-1} G, \pOS)\otimes^L_{\pOS}q_2^{-1}\shg$$$$\underset{(b)}{\simeq}
q_1^{-1}\Rhom(\C_{U\times S}, \C_{X\times S})\otimes \Rhom_{\pOS}(p^{-1} G, q_2^{-1}\shg)$$
Isomorphism $(a)$ follows by  \cite[Prop. 5.4.14 (ii)]{KS1} and isomorphism $(b)$ follows by the coherence of $G$.

 Similarly, the right hand term of $(\ref{E128})$ becomes isomorphic to $$\Rhom(q_1^{-1}\C_{U\times S}, \Rhom_{\pOS}(p^{-1} G, q_2^{-1}\shg))$$$$
 \simeq \Rhom(\C_{U\times Y\times S}, \Rhom_{\pOS}(p^{-1} G, q_2^{-1}\shg))$$
 
We have $$q_1^{-1}\Rhom(\C_{U\times S}, \C_{X\times S})\simeq \Rhom(\C_{U\times Y\times S}, \C_{X\times Y\times S})$$ Thus, for $F=\C_U\boxtimes G$,  Lemma \ref{L2}  follows by  \cite[Prop. 5.4.14 (ii)]{KS1}. 

As a consequence, Lemma \ref{L2} holds true for sheaves of the form $(\ast)\, \bigoplus_{\sigma\in\Delta}\CC_{U(\sigma)}\boxtimes G_\sigma(F)$. 

We shall now prove the general case ($F\in \Mod_{\R-c}(\pOS)$) by a standard argument. 
The epimorphism of Lemma \ref{lem:epiconst} induces the following exact sequence

$$0\to F'\to K\to F\to 0$$ where $K$ has the form $(\ast)$, thus $K$ and $F'$ belong to $\Mod_{\R-c}(\pOS)$. 
  We consider the associated distinguished triangles
  $$T(F)\to T(K)\to T(F')\overset{+1}{\to}$$
  $$T'(F)\to T'(K)\to T'(F')\overset{+1}{\to}$$
  
Thus \eqref{E128} reads $T(K)\simeq T'(K)$ in $\rD^\rb_{\R-c}(\pOS)$.
There exist integers $N<M$ only depending on $T$, $T'$ and $\shg$ such that the j-cohomology groups of $T(\cbbullet), T'(\cbbullet)$, with $\cbbullet$ replaced by $F,F', K$ (see \eqref{E128}), vanish for $j\notin [N, M]$. We have $\shh^NT(K)\simeq \shh^NT'(K)$ thus $\shh^N T(F)\to\shh^N T'(F)$ is injective (since $\shh^{N-1} T(F')=0=\shh^{N-1} T'(F')$). As $F$ is arbitrary, the same holds true for $F$ replaced by $F'$. By the Five Lemma it follows that $\shh^N T(F)\simeq\shh^N T'(F)$ and so 
$\shh^N T(F')\simeq\shh^N T'(F')$ again because $F$ is arbitrary. We then pursue recursively this argument which ends after a finite number of steps.

\end{proof}

\subsection{A complement on relative holonomic modules}
Let $X$ and $Y$ be complex manifolds and let us 
consider diagram (\ref{eq:commutpij}).

\begin{lemma}\label{L4}
Let $\shm\in\rD^\rb_{\hol}(\DXS)$ Then we have a natural isomorphism
 $$q_1^{-1}\Rhom_{\DXS}(\shm, \sho_{X\times S})\otimes_{p^{-1}\sho_S}q_2^{-1}\sho_{Y\times S}$$ $$\overset{\sim}{\to}\Rhom_{q_1^{-1}\DXS}(q_1^{-1}\shm, \sho_{X\times Y\times S})$$
\end{lemma}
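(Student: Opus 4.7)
The plan is to proceed by localization on $X\times Y\times S$, dévissage on $\shm$, reduction to a local free resolution, and finally a comparison of the resulting complexes along each fiber $\{(x_0,y_0)\}\times S$ using the bornological structure of $\sho_S$ combined with the $\sho_S$-coherence of the solution complex guaranteed by holonomicity.

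After localization, both sides define triangulated functors in $\shm\in\rD^\rb_{\hol}(\DXS)$, so by a standard way-out/truncation argument we reduce to $\shm\in\Mod_{\hol}(\DXS)$ concentrated in degree zero. Near a fixed point $x_0\in X$, pick a local finite free resolution $\DXS^{\sbullet}\to\shm$. Applying $q_1^{-1}\Rhom_{\DXS}(\cbbullet,\sho_{X\times S})\otimes_{p^{-1}\sho_S}q_2^{-1}\sho_{Y\times S}$ realises the LHS as the cohomology of the bounded complex with terms $q_1^{-1}\sho_{X\times S}^{n_j}\otimes_{p^{-1}\sho_S}q_2^{-1}\sho_{Y\times S}$, while $\Rhom_{q_1^{-1}\DXS}(q_1^{-1}(\cbbullet),\sho_{X\times Y\times S})$ realises the RHS as the cohomology of the parallel complex with terms $\sho_{X\times Y\times S}^{n_j}$ and the same differentials. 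The natural comparison map comes from the multiplication $q_1^{-1}\sho_{X\times S}\otimes_{p^{-1}\sho_S}q_2^{-1}\sho_{Y\times S}\to\sho_{X\times Y\times S}$, which is \emph{not} an iso on individual terms.

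To show the induced map is an iso on cohomology, we check along the fiber $\{(x_0,y_0)\}\times S$. By \eqref{E123} the restriction of $\sho_{X\times Y\times S}$ there is $(\sho_{X\times S}|_{\{x_0\}\times S})\hat{\otimes}_{\sho_S}(\sho_{Y\times S}|_{\{y_0\}\times S})$, the bornological tensor product of a DFN-free and an FN-free $\sho_S$-module. Meanwhile, the restriction of $\Rhom_{\DXS}(\shm,\sho_{X\times S})$ to $\{x_0\}\times S$ belongs to $\rD^\rb_{\coh}(\sho_S)$ (a property of $S$-$\R$-constructible sheaves, available here thanks to holonomicity of $\shm$). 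For such $\sho_S$-coherent, hence finitely presented, cohomology modules, the algebraic tensor product with the FN-free factor $\sho_{Y\times S}|_{\{y_0\}\times S}$ coincides with the bornological one (Houzel's theory recalled in Section~2.6, \cf \cite{H} and \cite{SS}); combined with flatness of the completed tensor product against FN-free modules, this yields the iso at the level of cohomology.

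The main obstacle is exactly this last reconciliation: on individual terms the multiplication map fails to be an isomorphism, so one must exploit both the finite presentation of the cohomology afforded by holonomicity (via $\sho_S$-coherence of the $x$-restricted solution complex) and the topological freeness of the structure sheaves in the bornological framework of Houzel and Schapira--Schneiders, to identify the cohomologies of the algebraic and of the bornological tensor product complexes.
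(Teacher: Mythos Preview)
Your proposal is correct and follows essentially the same route as the paper's proof: both reduce to checking the map on fibers $\{(x_0,y_0)\}\times S$ after taking a local bounded free $\DXS$-resolution of $\shm$, then invoke the FN-free structure of $\sho_{X\times S}|_{\{x_0\}\times S}$ and $\sho_{Y\times S}|_{\{y_0\}\times S}$ together with the $\sho_S$-coherence of $\Sol(\shm)|_{\{x_0\}\times S}$ (coming from holonomicity via $S$-$\C$-constructibility) to identify the algebraic and bornological tensor products. The paper packages your final step as a direct appeal to \cite[Prop.~3.13]{SS}, whereas you spell out its content; your preliminary d\'evissage to degree zero is harmless but unnecessary, since the free resolution already handles the complex case.
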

\begin{proof}
We adapt Proposition 1.4.3 of \cite{KK}. Since the morphism is well defined, it is enough to prove that, for any $x\in X,\, y\in Y$, it induces an isomorphism
 $$\Rhom_{\DXS}(\shm, \sho_{\XS})|_{\{x\}\times S}\otimes_{\sho_{S}}{\sho_{\YS}}|_{\{y\}\times S}$$ $$\simeq\Rhom_{q_1^{-1}\DXS}(q_1^{-1}\shm, \sho_{X\times Y\times S})|_{\{(x,y)\}\times S}$$ For any $s\in S$, in a neighbourhood of $(x,s)$, we now replace $\shm$ by a bounded locally free $\DXS$-resolution 
$(\DXS^{k}, p_k)_{k\in \Z}\underset{QIS}{\to} \shm$. Then we may assume that $\Rhom_{\DXS}(\shm, \sho_{\XS})|_{\{x\}\times S}$ is quasi-isomorphic to the complex $((\sho^k_{\XS})|_{\{x\}\times S}, p_k^\top)$ and that $\Rhom_{q_1^{-1}\DXS}(q_1^{-1}\shm, \sho_{X\times Y\times S})|_{\{(x,y\}\times S}$ is quasi-isomorphic to the complex $((\sho_{X\times Y\times S}^k)|_{\{(x,y)\}\times S}, p_k^\top)$.

 We have  $\sho^k_{\XS}|_{\{x\}\times S}\simeq \sho^k_{X,x}\hat{\otimes}\sho_S$ and $\sho_{X\times Y\times S}^k|_{\{(x,y)\}\times S}\simeq\sho_{X\times Y, (x,y)}\hat{\otimes} \sho_S$ and so $\sho^k_{\XS}|_{\{x\}\times S}$ as well as $\sho_{X\times Y\times S}^k|_{\{(x,y)\}\times S}$ are FN-free $\sho_S$-modules in the sense of \cite{RR}.

Since $\Rhom_{\DXS}(\shm, \sho_{X\times S})|_{\{x\}\times S}$ has $\sho_S$-coherent cohomologies 
we are in conditions of applying Proposition 3.13 of \cite{SS}, and, in view of (\ref{E123}), to conclude quasi isomorphisms $$(\sho^k_{X\times Y\times S}, p_k^\top)|_{\{(x,y)\}\times S}\simeq(\sho^k_{X\times S}, p_k^\top)|_{\{x\}\times S}\hat{\otimes}_{\sho_S}(\sho_{\YS})|_{\{y\}\times S}$$ $$\simeq\Rhom_{\DXS}(\shm, \sho_{\XS})|_{\{x\}\times S}\otimes_{\sho_{S}}{\sho_{\YS}}|_{\{y\}\times S}$$
\end{proof}

\section{Main result}

\subsection{Statement and proof of the main result}Let $\Delta$ denote the diagonal of $X\times X$. 

The canonical section of $i^{-1}_{\Delta\times S} \shh^{d_X}_{\Delta\times S}(\sho_{X\times X\times S})\otimes_{\sho_{X\times S}}\Omega_{X\times S/S}=i^{-1}_{\Delta\times S}B^{\infty}_{X\times S|X\times X\times S}\otimes_{\sho_{X\times S}}\Omega_{X\times S/S}$ corresponding to the global section $1$ of $\DXS^{\infty}$ allows to define an isomorphism of sheaves of rings  
\begin{equation}\label{E129}
\DXS^{\infty}\simeq  i^{-1}_{\Delta\times S}B^{\infty}_{X\times S|X\times X\times S}\otimes_{\sho_{X\times S}}\Omega_{X\times S/S}
\end{equation}
$$\simeq i^{-1}_{\Delta\times S}R\Gamma_{\Delta\times S}(\sho_{X\times X\times S})\otimes_{\sho_{X\times S}}\Omega_{\XS/S}[d_X]$$

\begin{theorem}\label{T1} 

Let $\shm\in\rD^\rb_{\hol}(\DXS)$. 

Let $F=\Sol \shm=\Rhom_{\DXS}(\shm, \sho_{X\times S})$.
Then we have a natural isomorphism in $\rD^\rb(\DXS^{\infty})$
$$\shm^{\infty}\simeq \Rhom_{p^{-1}\sho_S}(F, \sho_{X\times S})$$
\end{theorem}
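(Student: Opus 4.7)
My plan is to adapt the proof of \cite[Thm.~1.4.9]{KK} to the relative setting, using Lemmas \ref{L2} and \ref{L4} in place of the microlocal tools of the absolute argument. Since $\DXS^{\infty}$ is faithfully flat over $\DXS$, the functor $\shm\mapsto\shm^{\infty}$ is exact, and both sides of the asserted isomorphism are exact triangulated functors of $\shm\in\rD^\rb_{\hol}(\DXS)$. The evaluation pairing $\shm\otimes_{p^{-1}\sho_S}F\to\sho_{X\times S}$ provides, by $\DXS^{\infty}$-linear extension, a canonical morphism $\shm^{\infty}\to\Rhom_{p^{-1}\sho_S}(F,\sho_{X\times S})$; a d\'evissage on the cohomology of $\shm$ then reduces the claim to showing this morphism is an isomorphism when $\shm$ is a single holonomic $\DXS$-module.

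To prove the isomorphism in that case, I would realise both sides as restrictions to the diagonal of canonically isomorphic objects on $X\times X\times S$. Denote by $q_1,q_2\colon X\times X\times S\to X\times S$ the two projections and by $i_{\Delta\times S}\colon X\times S\hookrightarrow X\times X\times S$ the diagonal embedding. Using (\ref{E129}) and identifying the right $\DXS$-action on $\DXS^{\infty}$ with the $q_2^{-1}\DXS$-action on $\sho_{X\times X\times S}$ coming from the second projection, one rewrites
\[
\shm^{\infty}\simeq i^{-1}_{\Delta\times S}R\Gamma_{\Delta\times S}\bigl(\sho_{X\times X\times S}\otimes^{L}_{q_2^{-1}\DXS}q_2^{-1}\shm\bigr)\otimes_{\sho_{X\times S}}\Omega_{X\times S/S}[d_X].
\]

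The next step is to apply Lemma \ref{L4} with $Y=X$ (exchanging the roles of $q_1$ and $q_2$) to obtain
\[
q_2^{-1}F\otimes_{p^{-1}\sho_S}q_1^{-1}\sho_{X\times S}\simeq\Rhom_{q_2^{-1}\DXS}(q_2^{-1}\shm,\sho_{X\times X\times S}),
\]
and then to apply $i^{-1}_{\Delta\times S}R\Gamma_{\Delta\times S}(-)\otimes\Omega_{X\times S/S}[d_X]$ to both sides. On the left, the $S$-$\R$-constructibility of $F$ combined with the strategy of Lemma \ref{L2}---itself based on \cite[Prop.~5.4.14]{KS1}---lets one commute $R\Gamma_{\Delta\times S}$ through the external tensor and collapse onto the diagonal, yielding $\Rhom_{p^{-1}\sho_S}(F,\sho_{X\times S})$. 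On the right, a Sato-type local-duality identification along the codimension-$d_X$ embedding $i_{\Delta\times S}$, compatible with (\ref{E129}), converts $i^{-1}_{\Delta\times S}R\Gamma_{\Delta\times S}\Rhom_{q_2^{-1}\DXS}(q_2^{-1}\shm,\sho_{X\times X\times S})\otimes\Omega[d_X]$ into the tensor presentation of $\shm^{\infty}$ displayed in the previous paragraph.

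The main obstacle is this last Sato-type duality step, which converts the $\Rhom_{q_2^{-1}\DXS}$ into the tensor $\sho_{X\times X\times S}\otimes^{L}_{q_2^{-1}\DXS}q_2^{-1}\shm$ after applying $i^{-1}_{\Delta\times S}R\Gamma_{\Delta\times S}(-)\otimes\Omega[d_X]$. It requires careful bookkeeping of the $\DXS$-bimodule structures on $\sho_{X\times X\times S}$ and of the twist by $\Omega_{X\times S/S}[d_X]$ built into (\ref{E129}). In the absolute case this is where \cite{KK} microlocalise; in the relative framework considered here I expect to handle it purely via the microsupport estimate \cite[Prop.~5.4.14]{KS1} together with the FN-free/DFN-free topological properties of $\sho_{X\times S}|_{\{x\}\times S}$ recalled in Section~2.f and already exploited in the proofs of Lemmas \ref{L2} and \ref{L4}.
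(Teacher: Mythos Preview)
Your overall architecture matches the paper's: express $\shm^{\infty}$ via (\ref{E129}) as a diagonal restriction of local cohomology on $X\times X\times S$, then bring in Lemmas \ref{L2} and \ref{L4} and the computation (\ref{E20}). But the step you flag as ``the main obstacle''---the Sato-type duality converting the tensor $\sho_{X\times X\times S}\otimes^{L}_{q_2^{-1}\DXS}q_2^{-1}\shm$ into $\Rhom_{q_2^{-1}\DXS}(q_2^{-1}\shm,\sho_{X\times X\times S})$---is exactly where the missing idea lives, and the tools you propose (microsupport estimates, FN/DFN-free topological properties) will not supply it. Those ingredients are already fully spent in the proofs of Lemmas \ref{L2} and \ref{L4}; what remains is a genuine duality statement.

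The paper's device is to invoke the biduality $\bD\bD\shm\simeq\shm$ for holonomic modules (item (d) of \S2.a) \emph{before} unfolding (\ref{E129}). Writing $\shm=\bD\bD\shm$ and absorbing the twist by $\Omega_{\XS/S}[d_X]$ into the definition of $\bD$, one lands directly on
\[
\shm^{\infty}\simeq i^{-1}_{\Delta\times S}R\Gamma_{\Delta\times S}\bigl(\Rhom_{q_1^{-1}\DXS}(q_1^{-1}\bD\shm,\sho_{X\times X\times S})\bigr)[2d_X],
\]
already in $\Rhom$ form, with no separate Sato step required. Lemma \ref{L4} then yields $q_1^{-1}\Sol(\bD\shm)\otimes_{\pOS}q_2^{-1}\sho_{\XS}$, and since $\Sol(\bD\shm)\simeq\Rhom_{\pOS}(F,\pOS)$, Lemma \ref{L2} turns this into $\Rhom_{\pOS}(q_1^{-1}F,q_2^{-1}\sho_{\XS})$, to which (\ref{E20}) applies cleanly.

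This also shows why your treatment of ``the left'' does not work as written: applying $i^{-1}_{\Delta\times S}R\Gamma_{\Delta\times S}(-)\otimes\Omega_{\XS/S}[d_X]$ to $q_2^{-1}F\otimes_{\pOS}q_1^{-1}\sho_{\XS}$ and commuting as you suggest collapses to $F\otimes_{\pOS}\Omega_{\XS/S}[-d_X]$, not to $\Rhom_{\pOS}(F,\sho_{\XS})$. Passing from a tensor in $F$ to an $\Rhom$ in $F$ is exactly one application of duality; the paper places that duality on the $\DXS$-side via $\bD\bD=\Id$, so that Lemma \ref{L4} is applied to $\bD\shm$ rather than to $\shm$.
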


\begin{proof}

In view of $(\ref{E129})$, we have isomorphisms 

$\DXS^{\infty}\otimes_{\DXS}\shm$
\begin{align*}
\simeq i^{-1}_{\Delta\times S}R\Gamma_{\Delta\times S}(\sho_{X\times X\times S})\otimes_{\sho_{X\times S}}\Omega_{\XS/S}[d_X]\otimes_{\DXS}\bD\bD\shm\\ 
\simeq i^{-1}_{\Delta\times S}R\Gamma_{\Delta\times S}(\Rhom_{q_1^{-1}\DXS}(q_1^{-1}\bD\shm, \sho_{X\times X\times S}))[2d_X]
\end{align*}

According to Lemma \ref{L4}, we have $$\Rhom_{q_1^{-1}\DXS}(q_1^{-1}\bD\shm, \sho_{X\times X\times S})$$ $$\simeq q_1^{-1}\Rhom_{\DXS}(\bD \shm, \sho_{\XS})\otimes_{\pOS}q_2^{-1}\sho_{\XS}$$

On the other hand the holonomicity of $\shm$ implies the isomorphism $$\Rhom_{\DXS}(\bD \shm, \sho_{\XS})$$ $$\simeq \Rhom_{p^{-1}\sho_S}(\Rhom_{\DXS}(\shm, \sho_{\XS}), p^{-1}\sho_S) $$ According to Lemma \ref{L2} with $X=Y$, $F=\Rhom_{\DXS}(\shm, \sho_{\XS})$ and $\shg=\sho_{\XS}$, we conclude a natural isomorphism $$\Rhom_{q_1^{-1}\DXS}(q_1^{-1}\bD\shm, \sho_{X\times X\times S})$$ $$\simeq \Rhom_{\pOS}(q_1^{-1}\Rhom_{\DXS}(\shm, \sho_{\XS}), q_2^{-1}\sho_{\XS})$$

Applying $i^{-1}_{\Delta\times S}R\Gamma_{\Delta\times S}$ and the shift $[2d_X]$ to both terms we finally deduce a natural isomorphim $$\shm^{\infty}\simeq \Rhom_{p^{-1}\sho_S}(\Rhom_{\DXS}(\shm,\sho_{X\times S}), \sho_{\XS})$$ which follows by the sequence of isomorphisms 
\begin{equation}\label{E20}
\end{equation}
$$i^{-1}_{\Delta\times S}R\Gamma_{\Delta\times S}(q_2^{-1}\sho_{X\times S})$$
$$\simeq i^{-1}_{\Delta\times S}\Rhom(\C_{\Delta\times S}, q_2^{-1}\sho_{X\times S})$$
$$\underset{(a')}{\simeq} i^{-1}_{\Delta\times S}(\Rhom(\C_{\Delta\times S}, \C_{X\times X\times S})\otimes q_2^{-1}\sho_{X\times S})$$
$$\underset{(b')}\simeq\sho_{X\times S}[-2d_X]$$

where $(a')$ follows by \cite[Prop. 5.4.14 (ii)]{KS1} and $(b')$ by the commutation of the functors $\otimes$ and $i^{-1}_{\Delta\times S}$.
\end{proof}
\begin{corollary}\label{C1} 
\begin{enumerate}
\item{We have an isomorphism of functors on $\rD^\rb_{\cc}(\pOS)$: $$\RH^S(\cbbullet)^{\infty}[-d_X]\simeq\Rhom_{\pOS}(\cbbullet, \sho_{X\times S})$$}
\item{Let $\shm, \shn\in\rD^\rb_{\rhol}(\DXS)$. We have a natural isomorphism in $\rD^\rb_{\cc}(p^{-1}\sho_S)$ 
$$\Rhom_{\DXS}(\shm,\shn)\simeq \Rhom_{\DXS}(\shm, \shn^{\infty})$$}
\end{enumerate}
\end{corollary}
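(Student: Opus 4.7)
The plan is to deduce both parts directly from Theorem~\ref{T1} combined with the Riemann--Hilbert equivalence recalled in Theorem~\ref{T20}. The only real bookkeeping is the shift between $\Sol=\Rhom_{\DXS}(-,\sho_{\XS})$ and $\pSol=\Sol[d_X]$; no new technical input beyond the already-established identifications is needed. I do not anticipate a genuine obstacle—the heavy lifting has been absorbed by Theorem~\ref{T1} and by the adjunction isomorphism of Theorem~\ref{T20}(3).

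For part (a), I would set $\shm:=\RH^S(F)$. By Theorem~\ref{T20}(1), $\shm$ lies in $\rD^\rb_{\rhol}(\DXS)\subset\rD^\rb_{\hol}(\DXS)$, so Theorem~\ref{T1} applies and produces a natural isomorphism $\shm^{\infty}\simeq\Rhom_{\pOS}(\Sol\shm,\sho_{\XS})$. The same equivalence yields $\pSol\RH^S(F)\simeq F$, hence $\Sol\RH^S(F)\simeq F[-d_X]$. Substituting and unshifting gives $\RH^S(F)^{\infty}[-d_X]\simeq\Rhom_{\pOS}(F,\sho_{\XS})$, which is (a). The naturality of each step, and in particular of the equivalence of Theorem~\ref{T20}(1), makes this an isomorphism of functors on $\rD^\rb_{\cc}(\pOS)$.

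For part (b), I would apply (a) to $F:=\pSol\shn$; since $\shn$ is regular holonomic, $\RH^S\pSol\shn\simeq\shn$ (this is where the regularity hypothesis on $\shn$ is used), so (a) upgrades to $\shn^{\infty}[-d_X]\simeq\Rhom_{\pOS}(\pSol\shn,\sho_{\XS})$. Applying $\Rhom_{\DXS}(\shm,-)$ transforms this into $\Rhom_{\DXS}(\shm,\shn^{\infty})[-d_X]\simeq\Rhom_{\DXS}(\shm,\Rhom_{\pOS}(\pSol\shn,\sho_{\XS}))$. Finally, Theorem~\ref{T20}(3), invoked with this $F$ and our $\shm\in\rD^\rb_{\rhol}(\DXS)$, identifies the right-hand side with $\Rhom_{\DXS}(\shm,\RH^S(\pSol\shn)[-d_X])\simeq\Rhom_{\DXS}(\shm,\shn)[-d_X]$; cancelling the common shift yields the claim.
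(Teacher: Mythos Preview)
Your proof is correct and follows essentially the same approach as the paper: part~(a) is deduced from Theorem~\ref{T1} together with the identity $\pSol\circ\RH^S\simeq\Id$, and part~(b) combines~(a) (applied with $F=\pSol\shn$ and the identification $\RH^S\pSol\shn\simeq\shn$) with the adjunction isomorphism of Theorem~\ref{T20}(c). The paper's proof is more terse but the logic and the shifts match yours exactly.
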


\begin{proof} (a) is an immediate consequence of Theorem \ref{T1} since $\Sol[d_X] \circ \RH^S=\Id$. 

(b) We set $\shn=\RH^S(G)$ with $G=\pSol(\shn)\in \rD^\rb_{\cc}(\pOS)$, then $\shn^{\infty}\simeq \Rhom_{p^{-1}\sho_S}(G,\sho_{X\times S})[d_X]$ and the result follows by Theorem \ref{T20} (c).

\end{proof}

\begin{definition}\label{D1}If $\shm$ is a holonomic $\DXS$-module, we denote by $\shm_{reg}$ the subsheaf of $\shm^{\infty}$ of local sections $u$ satisfying the following condition: there exists a coherent ideal $\shj$ in $\DXS$ such that $\shj u=0$ and $\DXS/\shj$ is regular holonomic.
\end{definition}
\begin{lemma}\label{L3} $\shm_{reg}$ is a $\DXS$-module .
\end{lemma}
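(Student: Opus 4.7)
The plan is to verify that the subsheaf $\shm_{reg}\subset\shm^{\infty}$ is closed under addition and under left multiplication by local sections of $\DXS$; the sheaf-theoretic structure (i.e.\ being a genuine subsheaf) is immediate, since the defining property is local on $X\times S$ and is preserved under restriction. The whole argument rests on a single tool already recorded in Section 2.1: the full subcategory $\Mod_{\rhol}(\DXS)$ of $\Mod_{\coh}(\DXS)$ is closed under extensions in $\Mod(\DXS)$ and under subquotients in $\Mod_{\coh}(\DXS)$. Combined with the fact that $\DXS$ is a coherent sheaf of rings, so that coherent $\DXS$-modules form an abelian subcategory, this reduces both verifications to elementary module-theoretic manipulations.

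For additivity, suppose $u_{1},u_{2}$ are local sections of $\shm^{\infty}$ annihilated by coherent ideals $\shj_{1},\shj_{2}\subset\DXS$ with $\DXS/\shj_{i}$ regular holonomic. Then $u_{1}+u_{2}$ is annihilated by $\shj_{1}\cap\shj_{2}$, which is coherent. The canonical map $\DXS/(\shj_{1}\cap\shj_{2})\hookrightarrow \DXS/\shj_{1}\oplus\DXS/\shj_{2}$ exhibits the left-hand side as a coherent $\DXS$-submodule of a regular holonomic module (regularity of the direct sum uses closure under extensions), hence regular holonomic by the subquotient-closure property. Consequently $u_{1}+u_{2}\in\shm_{reg}$, and the case of $0$ and additive inverses is trivial.

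For the $\DXS$-action, fix $u\in\shm_{reg}$ annihilated by a coherent ideal $\shj$ with $\DXS/\shj$ regular holonomic, and let $P$ be a local section of $\DXS$. Consider the ideal quotient
\[
\shj':=(\shj:P)=\{\, Q\in\DXS : QP\in\shj\,\},
\]
which visibly satisfies $\shj'\cdot(Pu)\subset \shj\cdot u=0$. The left $\DXS$-linear map $\varphi:\DXS\to\DXS/\shj$, $Q\mapsto QP+\shj$, has kernel exactly $\shj'$, so $\shj'$ is coherent (kernels of maps between coherent modules over the coherent ring $\DXS$ are coherent) and induces an embedding $\DXS/\shj'\hookrightarrow\DXS/\shj$. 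By the subquotient-closure property again, $\DXS/\shj'$ is regular holonomic, so $Pu\in\shm_{reg}$.

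The only point requiring care is to keep the arguments entirely inside $\Mod_{\coh}(\DXS)$ so that one may legitimately invoke the closure of $\Mod_{\rhol}(\DXS)$ under subquotients; this is what forces us to pass through the ideal quotient $(\shj:P)$ rather than attempting to directly modify the annihilator of $u$, and is also why we need $\DXS$ to be a coherent sheaf of rings. No further functorial machinery from later in the paper is required.
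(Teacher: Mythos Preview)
Your proof is correct and follows essentially the same approach as the paper: both form the ideal quotient $\shj'=(\shj:P)$, observe that $\DXS/\shj'$ embeds as a coherent submodule of $\DXS/\shj$, and invoke the closure of $\Mod_{\rhol}(\DXS)$ under subquotients (item (j) of Section~2.a). You additionally spell out the additivity step via $\shj_1\cap\shj_2$, which the paper leaves implicit.
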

\begin{proof}
The proof is similar to that in Prop.1.1.20 in \cite{KK}. If $u$ is a local section of $\shm_{reg}$, let $\shj$ be a left ideal of $\DXS$ as in Definition \ref{D1}, and let $P\in\DXS$; then the left ideal $\shj'$ of operators $Q$ such that $QP\in \shj$ is coherent and $\DXS/\shj'$ is isomorphic to a coherent $\DXS$-submodule of $\DXS/\shj$ hence, in view of (j) of $2.a$, it is regular holonomic so that conditions on Definition \ref{D1} are satisfied by $Pu$.
\end{proof}

Clearly the correspondence $$\shm\in\Mod_{\hol}(\DXS)\mapsto \shm_{reg}\in\Mod_{\rhol}(\DXS)$$ defines a left exact functor.

\begin{theorem}\label{T2} 
\begin{enumerate}
\item{Let $\shn\in \Mod_{\rhol}(\DXS)$. Then $\shn=\shn_{reg}$.}
\item{Let $\shn\in \Mod_{\hol}(\DXS)$. Then $\shn_{reg}$ is a regular holonomic $\DXS$-module isomorphic to $\RH^S(\pSol \shn)$. In particular $\shn^{\infty}\simeq \shn_{reg}^{\infty}$.}
\end{enumerate}
\end{theorem}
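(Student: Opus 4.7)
I would build the proof on two ingredients furnished by what precedes. First, the natural identification $\shn^{\infty}\simeq \RH^S(\pSol\shn)^{\infty}$ in $\rD^\rb(\DXS^{\infty})$, obtained by combining Theorem~\ref{T1} applied to $\shn$ with Corollary~\ref{C1}(a) applied to $F=\pSol\shn$ (after a shift by $d_X$, using that $\Sol\shn=\pSol\shn[-d_X]$). Second, Corollary~\ref{C1}(b), whose $\shh^0$ yields, for $\shm,\shn\in\rD^\rb_{\rhol}(\DXS)$, a natural isomorphism $\shhom_{\DXS}(\shm,\shn)\simeq \shhom_{\DXS}(\shm,\shn^{\infty})$ induced by the canonical map $\shn\to \shn^{\infty}$.

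\textbf{Part (a).} Faithful flatness of $\DXS^{\infty}$ over $\DXS$ provides the embedding $\shn\hookrightarrow \shn^{\infty}$. Given a local section $u$ of $\shn$, setting $\shj:=\mathrm{Ann}_{\DXS}(u)$ identifies $\DXS/\shj$ with $\DXS\cdot u\subset \shn$, a coherent submodule of the regular holonomic $\shn$; by closure of $\Mod_{\rhol}(\DXS)$ under subquotients in $\Mod_{\coh}(\DXS)$ (item (j) of \S2.a) it is regular holonomic, so $u\in\shn_{reg}$, giving $\shn\subset \shn_{reg}$. For the reverse inclusion, take $u\in\shn_{reg}$ with $\shj u=0$ and $\DXS/\shj$ regular holonomic. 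The morphism $\DXS/\shj\to\shn^{\infty}$, $1\mapsto u$, belongs to $\Hom_{\DXS}(\DXS/\shj,\shn^{\infty})=\Hom_{\DXS}(\DXS/\shj,\shn)$ by Corollary~\ref{C1}(b), so it factors through $\shn$ and $u\in\shn$.

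\textbf{Part (b).} Set $\shm:=\RH^S(\pSol\shn)\in\rD^\rb_{\rhol}(\DXS)$. By the strategy paragraph, $\shm^{\infty}\simeq \shn^{\infty}$ in $\rD^\rb(\DXS^{\infty})$. Since $\shn\hookrightarrow \shn^{\infty}$, the complex $\shn^{\infty}$ is concentrated in degree~$0$; hence so is $\shm^{\infty}$, and faithful flatness of $\DXS^{\infty}$ over $\DXS$ forces $\shm$ to be concentrated in degree~$0$. Thus $\shm$ is a regular holonomic $\DXS$-module, and the composite $\shm\hookrightarrow \shm^{\infty}\simeq \shn^{\infty}$ gives a canonical embedding $\shm\subset \shn^{\infty}$. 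For $u\in\shm$, the argument of part~(a) applied to $\shm$ shows $u\in\shm_{reg}$; transporting through the $\DXS^{\infty}$-linear isomorphism $\shm^{\infty}\simeq \shn^{\infty}$ yields $u\in\shn_{reg}$. Conversely, for $u\in\shn_{reg}$ with $\shj u=0$ and $\DXS/\shj$ regular holonomic, the morphism $\DXS/\shj\to\shn^{\infty}\simeq \shm^{\infty}$ factors through $\shm$ by Corollary~\ref{C1}(b) applied to the pair $\DXS/\shj$ and $\shm$; hence $u\in\shm$. Therefore $\shn_{reg}=\shm\simeq \RH^S(\pSol\shn)$, and $\shn^{\infty}\simeq \shn_{reg}^{\infty}$ is immediate.

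\textbf{Main obstacle.} The one point going beyond formal bookkeeping is the naturality check underlying the last step: one must verify that the abstract isomorphism $\shm^{\infty}\simeq \shn^{\infty}$ produced by Theorem~\ref{T1} and Corollary~\ref{C1}(a) intertwines the canonical inclusions $\shm\hookrightarrow \shm^{\infty}$ and $\shn_{reg}\hookrightarrow \shn^{\infty}$ with the factorization supplied by Corollary~\ref{C1}(b). This compatibility should be transparent from the fact that both isomorphisms originate from the adjunction in Theorem~\ref{T20}(c) together with the explicit trace-type description of $\DXS^{\infty}$ in (\ref{E129}), but it has to be spelled out in order to conclude $\shn_{reg}=\shm$ on the nose rather than merely up to an isomorphism of $\DXS$-modules.
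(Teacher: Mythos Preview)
Your proof is correct and follows essentially the same route as the paper: part~(a) is identical, and for part~(b) both you and the paper produce the $\DXS^{\infty}$-isomorphism $\shn^{\infty}\simeq\RH^S(\pSol\shn)^{\infty}$ from Theorem~\ref{T1} and Corollary~\ref{C1}(a), then use Corollary~\ref{C1}(b) and part~(a) to match $\shn_{reg}$ with $\RH^S(\pSol\shn)$ inside $\shn^{\infty}$. Your argument that $\shm=\RH^S(\pSol\shn)$ is concentrated in degree~$0$ via faithful flatness is a genuine addition---the paper uses this implicitly but does not justify it; and the naturality concern you flag in your ``main obstacle'' is harmless, since $(\cbbullet)_{reg}$ is defined intrinsically as the $\DXS$-submodule of sections with regular holonomic annihilator quotient, so any $\DXS$-linear isomorphism of the ambient $\DXS^{\infty}$-modules automatically restricts to an isomorphism of the $(\cbbullet)_{reg}$ submodules.
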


\begin{proof} (a)\,\,By the assumption of regularity, we derive a natural inclusion $\shn\subset\shn_{reg}$. Let us now prove the inclusion $\shn_{reg}\subset \shn$. 
Let $u$ be a local section of $\shn_{reg}$ and let $\shj$ be a left coherent ideal of $\DXS$ such that $\shj u=0$ and such that $\DXS/\shj$ is regular holonomic. We thus deduce a natural morphism $\phi:\DXS/\shj\to \shn^{\infty}$ as the composition of $\DXS/\shj\twoheadrightarrow\DXS u\hookrightarrow \shn^{\infty}$. Applying Corollary \ref{C1}(b) to the cohomologies of degree zero with $\shm=\DXS/\shj$, $\phi$ factors through $\shn$ thus $\DXS u\subset \shn$.

(b) According to Theorem \ref{T1} we have a $\DXS$-linear isomorphim $$\Phi: \shn^{\infty}\simeq \RH^S(\pSol \shn)^{\infty}$$ In view of $(a)$ we conclude by a similar argument that $\Phi(\shn_{reg})$ is contained in $\RH^S(\pSol \shn)$. Similarly, using $\Phi^{-1}$, we conclude that $\shn_{reg}$ contains $\Phi^{-1}(\RH^S(\pSol \shn))$. Thus $\Phi$ provides the desired $\DXS$-isomorphism.

\end{proof}
\subsection{Example}

We will assume that $d_S=1$. Our goal is to explicit $(\cbbullet)^{\infty}$ in the case of the relative hermitian duality(\cf \cite{MFCS3}) by proving the relative variant of Remark 2.1 in \cite{Ka4}.

We denote by $\Db_{\XS}$ the sheaf of distributions on the really analytic manifold $X_{\R}\times S_{\R}$ underlying $X\times S$ and by  $\Db_{\XS/S}$ the subsheaf of $\Db_{\XS}$ of germs of distributions holomorphic along $S$. We call $\Db_{\XS/S}$ the sheaf of relative distributions. We denote by $\bar{X}$ the complex conjugate manifold of the manifold $X$.
We recall the main results (Theorem 2) in \cite{MFCS3}:
 
\begin{enumerate}
\item
{The relative Hermitian duality functor
\[
\Conj_{X,\ov X}^S
(\cbbullet):=\Rhom_{\DXS}(\cbbullet, \Db_{\XS/S})
\]
induces an equivalence
$$\Conj_{X,\ov X}^S:\rD^\rb_{\rhol}(\DXS)\isom\rD^\rb_{\rhol}(\DXbS)^\op$$}

\item{$$
\Conj_{\ov X,X}^S\circ\Conj_{X,\ov X}^S\simeq\id.
$$}
\item{Moreover, the relative conjugation functor $$\conj_{X,\ov X}^S:=\Conj_{X,\ov X}^S\circ \bD$$ induces an equivalence
$$
\conj_{X,\ov X}^S:\rD^\rb_{\rhol}(\DXS)\isom\rD^\rb_{\rhol}(\DXbS),
$$
and there is an isomorphism of functors
$$\pSol_{\ov X}\circ\conj_{X,\ov X}^S\simeq\pSol_X:\rD^\rb_{\rhol}(\DXS)\to\rD^\rb_{\cc}(\pOS).
$$
}

\end{enumerate}

Let $B_{X_{\R}\times S_{\R}}$ be the sheaf of Sato's hyperfunctions on $X_{\R}\times S_{\R}$. Let $B_{X\times S/S}$ denote the subsheaf of $B_{X_{\R}\times S_{\R}}$ of germs of hyperfunctions which are holomorphic along the parameter manifold $S$.

\begin{proposition}\label{P2}
Let $\shm$ be an object of $\rD^\rb_{\rhol}(\DXS)$. Then we have $$\Conj_{X,\ov X}^S(\shm)^{\infty}\simeq \Rhom_{\DXS}(\shm, B_{X\times S/S})$$ 

\end{proposition}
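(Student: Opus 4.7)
The plan is to relativize the argument for Remark 2.1 of \cite{Ka4}, feeding Theorem \ref{T1} of the present paper in place of \cite[Th.\,1.4.9]{KK} and using the relative hermitian duality summary to translate the solution complex that will appear.

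I would first apply Theorem \ref{T1} to $\Conj_{X,\ov X}^S(\shm)$. By item (1) of the hermitian duality summary, $\Conj_{X,\ov X}^S(\shm)\in\rD^\rb_{\rhol}(\DXbS)\subset\rD^\rb_{\hol}(\DXbS)$, so Theorem \ref{T1} on $\ov X\times S$ gives
$$\Conj_{X,\ov X}^S(\shm)^{\infty}\simeq\Rhom_{\pOS}(\Sol_{\ov X}\Conj_{X,\ov X}^S(\shm),\sho_{\ov X\times S}).$$
I would then identify the solution complex on the right. Combining $\conj_{X,\ov X}^S=\Conj_{X,\ov X}^S\circ\bD$ (definition in item (3) of the hermitian summary), $\bD^2=\id$ (item (2)), the identity $\pSol_{\ov X}\circ\conj_{X,\ov X}^S\simeq\pSol_X$ (item (3)), and $\pSol_X\circ\bD=\pDR_X$ (Section 2.d), applying the last identity to $\bD\shm$ yields $\pSol_{\ov X}\Conj_{X,\ov X}^S(\shm)\simeq\pDR_X\shm$, hence $\Sol_{\ov X}\Conj_{X,\ov X}^S(\shm)\simeq\DR_X\shm$. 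So after these two steps one has
$$\Conj_{X,\ov X}^S(\shm)^{\infty}\simeq\Rhom_{\pOS}(\DR_X\shm,\sho_{\ov X\times S}).$$

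The third step, which I expect to be the main obstacle, is the identification
$$\Rhom_{\pOS}(\DR_X\shm,\sho_{\ov X\times S})\simeq\Rhom_{\DXS}(\shm,B_{\XS/S}).$$
My plan is to mimic the diagonal computation at the end of the proof of Theorem \ref{T1}, with the diagonal of $X\times X$ replaced by the anti-diagonal embedding $\delta\colon X_\R\times S\hookrightarrow X\times\ov X\times S$, $(x,s)\mapsto(x,\bar x,s)$. Concretely, I would use the Sato-type description
$$B_{\XS/S}\simeq\delta^{-1}R\Gamma_{X_\R\times S}(\sho_{X\times\ov X\times S})$$
(up to an appropriate shift and twist by the orientation of $X_\R$ in $X$), obtained from classical Sato hyperfunctions $B_{X_\R\times S_\R}$ by passing to germs annihilated by $\partial_{\bar s_j}$. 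Commuting $\Rhom_{\DXS}(\shm,-)$ past the topological functor $\delta^{-1}R\Gamma_{X_\R\times S}$ and invoking Lemma \ref{L4} with $Y=\ov X$ rewrites the inner $\Rhom$ as $q_1^{-1}\Sol\shm\otimes_{\pOS}q_2^{-1}\sho_{\ov X\times S}$. Combining with the duality relation $\DR_X\shm\simeq\bD\Sol_X\shm$ (up to shift; a consequence of $\bD\pSol=\pDR$ of the summary together with the explicit duality $\bD F=\Rhom_{\pOS}(F,\pOS)[2d_X]$ on $\rD^\rb_\cc(\pOS)$ recalled in Section 2.b) yields the desired equivalence. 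The delicate part is to keep the shifts and orientation signs consistent and to check that the topological operations commute as needed in the relative framework; structurally the argument exactly parallels the diagonal computation \eqref{E20} at the end of the proof of Theorem \ref{T1}.
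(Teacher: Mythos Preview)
Your proposal is correct and follows essentially the same route as the paper: the paper also uses the Sato description $B_{\XS/S}\simeq R\Gamma_{X_\R\times S}(\sho_{X\times\ov X\times S})[2d_X]$, then Lemma~\ref{L4} (with $Y=\ov X$), Lemma~\ref{L2}, the anti-diagonal computation parallel to~\eqref{E20}, the identification $\DR_X\shm\simeq\Sol_{\ov X}\Conj_{X,\ov X}^S(\shm)$ coming from the hermitian duality summary, and finally Theorem~\ref{T1}. The only differences are cosmetic: the paper runs the chain in the opposite direction (starting from $\Rhom_{\DXS}(\shm,B_{\XS/S})$ and ending at $\Conj_{X,\ov X}^S(\shm)^\infty$) and invokes Lemma~\ref{L2} explicitly rather than folding it into the ``parallel to~\eqref{E20}'' step.
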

\begin{proof}
We note that, by definition of $B_{X_{\R}\times S_{\R}}$, we have an isomorphism of $\DXS^{\infty}$-modules 
$$B_{X\times S/S}\simeq R\Gamma_{X_{\R}\times S}(\sho_{X\times \bar{X}\times S})[2d_X]$$ where as usual we regard $X\times\bar{X}$ as a complexification of $X_{\R}$.
Hence $$\Rhom_{\DXS}(\shm, B_{X\times S/S})\simeq R\Gamma_{X_{\R}\times S}(\Rhom_{\DXS}(\shm, \sho_{X\times\bar{X}\times S}))[2d_X]$$ 
Let $q_1$ denote the projection $X\times \bar{X}\times S\to X\times S$ and let $q_2$ denote the projection $X\times \bar{X}\times S\to \bar{X}\times S$.
We have 
$$R\Gamma_{X_{\R}\times S}\Rhom_{\DXS}(\shm, \sho_{X\times\ov{X}\times S})[2d_X]$$ 
$$\underset{(a')}{\simeq} R\Gamma_{X_{\R}\times S}(q_1^{-1}\Sol \shm {\otimes}_{\pOS}q_2^{-1}\sho_{\bar{X}\times S})[2d_X]$$ $$\simeq R\Gamma_{X_{\R}\times S}(\Rhom_{\pOS}(q_1^{-1}\DR \shm, \pOS){\otimes}_{\pOS}q_2^{-1}\sho_{\bar{X}\times S})[2d_X]$$ $$\underset {(b')}{\simeq}R\Gamma_{X_{\R}\times S}\Rhom_{\pOS}(q_1^{-1}\DR \shm, q_2^{-1}\sho_{\bar{X}\times S})[2d_X] $$ $$\underset{(c')}{\simeq} \Rhom_{\pOS}(\DR \shm, \sho_{\bar{X}\times S})$$ $$\underset{d'}{\simeq }\Rhom_{\pOS}(\Sol \Conj_{X,\ov X}^S(\shm), \sho_{\bar{X}\times S})$$ $$\underset{(e')}{\simeq}\Conj_{X,\ov X}^S(\shm)^{\infty}$$ where $(a')$ follows by Lemma \ref{L4}, $(b')$ follows by Lemma \ref{L2}, $(c')$ follows by a similar argument as in $(\ref{E20})$, $(d')$ follows by $(c)$ and $(e')$ follows by Theorem  \ref{T1}.
\end{proof}


\begin{thebibliography}{99}




\bibitem{Bj}
J-E Bjork,
{\em Analytic $\shd$-Modules and Applications,}
Mathematics and Its Applications, \textbf{247}, Kluwer Academic Publishers, Dordrecht, Netherlands (1993).


\bibitem{AGS}

A. D'Agnolo, S. Guillermou and P. Schapira,
{\em Regular Holonomic $\shd[[\hbar]]$-modules,}
Publ. RIMS, Kyoto Univ., \textbf{47},\,1, 221-255, (2011).

\bibitem{AK}

A. D'Agnolo and M. Kashiwara,
{\em Riemann-Hilbert correspondence for holonomic $\shd$-modules,}
Publ. Math\'ematiques dee l'IH\'ES, \textbf{123}, 69-197, (2016).

\bibitem{FMF}L. Fiorot and T. Monteiro Fernandes,
{\em t-structures for relative D-modules and t-exactness
of the de Rham functor}, J. Algebra, \textbf{509}, 419–444, (2018).

\bibitem{FMFS1}
L. Fiorot, T. Monteiro Fernandes, and C. Sabbah, {\em Relative regular Riemann-Hilbert correspondence}, Proc. London Math. Soc. (3) \textbf{122}, 434–457 (2021), Erratum: Ibid., 123, no. 6, p. 649–654 (2021).

\bibitem{FMFS2}
\bysame, 
{\em Relative regular Riemann-Hilbert correspondence II}, to appear in Compositio Math., \rm{arXiv:2203.05444}, (2022).

\bibitem{H}
C. Houzel, 
{\em Espaces analytiques relatifs et th\'eor\`emes de finitude}, Math. Annalen, \textbf{205}, 13-54 (1973).

\bibitem{Ka0}
M. Kashiwara,
{\em On the holonomic systems of linear differential equations II,}
\/Invent. Math. {\bf 49}, 121-135, (1978).






\bibitem{Ka3}
\bysame,
{\em The Riemann-Hilbert problem for holonomic systems}, \,\/Publ. RIMS, Kyoto University {\bf 20}, 319-365, (1984).

\bibitem{Ka4}
\bysame,
{\em Regular Holonomic $\shd$-modules and distributions on complex manifolds}, \,\/Advanced studies in pure mathematics, {\bf 8}, 199-206 (1986).

\bibitem{Ka2}
\bysame,
{\em $\mathcal{D}$-modules and microlocal calculus},
\/Translations of Mathematical Monographs {\bf 217}, American Math. Soc. (2003).




\bibitem{KK}
M. Kashiwara and T. Kawai,
{\em On Holonomic Systems of Micro-differential Equations. III-Systems with Regular Singularities}, \,\/Publ. RIMS, Kyoto University {\bf 17}, 3, 813-979 (1981).
\bibitem{KS1}

M. Kashiwara and P. Schapira,
{\em Sheaves on Manifolds,}
\/Grundlehren der Math. Wiss. {\bf 292}, Springer-Verlag (1990).


\bibitem{KS4}
\bysame,
{\em Moderate and formal cohomology associated with constructible sheaves},
\/M\'emoires de la SMF {\bf 64}, Soci\'et\'e Math\'ematique de France (1996).

\bibitem{KS6}
\bysame, 
{\em Ind-sheaves}, Astérisque, Soci\'et\'e Math\'ematique de France, {\bf 271} (2001).
\bibitem{KS7}
\bysame,
{\em Categories and sheaves}, Grundlehren der Math. Wiss.
{\bf 332}, Springer-Verlag (2006).

\bibitem{MFP1}
T.~Monteiro~Fernandes and L.~Prelli, \emph{Relative subanalytic sheaves}, Fund.
  Math. \textbf{226}, no.~1, 79--100, (2014).

\bibitem{MFP2}
\bysame, \emph{{Relative subanalytic sheaves II}}, Milan J.~Math. \textbf{89}
 , 387--411 (2021).




\bibitem{MFCS1}
T.~Monteiro~Fernandes and C.~Sabbah, \emph{{On the de Rham complex of mixed
  twistor $\mathcal{D}$-modules}}, Internat. Math. Res. Notices, {\bf 21},
  4961--4984, (2013).
  
  \bibitem{MFCS2}
\bysame, \emph{{Riemann-Hilbert correspondence for mixed twistor
  $\mathcal{D}$-modules}}, J.~Inst. Math. Jussieu \textbf{18}, no.~3,
  629--672, (2019).

  \bibitem{MFCS3}
\bysame, \emph{{The relative hermitian duality functor}}, 
https://arxiv.org/pdf/2012.15171.pdf (2020).




\bibitem{LP08}
L.~Prelli,
{\em Sheaves on subanalytic sites}, 
Rend. Sem. Mat. Univ. Padova, {\bf 120}, 167--216, (2008).

\bibitem{RR}
J. P. Ramis and G. Ruget, {\em R\'esidus et dualit\'e}, Inv. Math. \textbf{26}, 89-131, (1974).
\bibitem{S}
C. Sabbah, {\em Polarizable twistor $\shd$-modules}, Ast\'erisque, {\bf{300}}, Soci\'et\'e Math\'ematique de France, Paris, (2005).

\bibitem{SKK}
M. Sato, T. Kawai and M. Kashiwara,
{\em Microfunctions and pseudo-differential equations} in Hyperfunctions and pseudo-differential equations, Lecture notes in Maths, Springer-Verlag, Berlin, {\bf{287}}, 265-529, (1973).


\bibitem{SS}
P. Schapira and J-P. Schneiders, {\em Elliptic pairs I. Relative finiteness and duality} in Index Theorem for Elliptic Pairs, Ast\'erisque, {\bf{224}},  Soci\'et\'e Math\'ematique de France, Paris, 5-60, (1994).


\end{thebibliography}
\end{document}